\theoremstyle{plain}
\newtheorem{teo}{Theorem}[section]
\newtheorem{lemma}[teo]{Lemma}
\newtheorem{prop}[teo]{Proposition}
\newtheorem{ackn}{Acknowledgments\!}
\theoremstyle{definition}
\theoremstyle{remark}
\numberwithin{equation}{section}
\def\SS{{{\mathbb S}}}
\def\RR{{\mathbb R}}
\def\RRR{{\mathrm R}}
\def\WWW{{\mathrm W}}
\def\CCC{{\mathrm C}}
\def\M1{\mathscr{M}_{1}}
\def\eps{\varepsilon}
\title[On conformally flat manifolds with constant positive  scalar curvature]{On conformally flat manifolds \\with constant positive scalar curvature}
\author[Giovanni Catino]{Giovanni Catino}
\address[Giovanni Catino]{Dipartimento di Matematica, Politecnico di Milano, Piazza Leonardo da Vinci 32, 20133 Milano, Italy}
\email[]{giovanni.catino@polimi.it}
\begin{document}

\begin{abstract} We classify compact conformally flat $n$-dimensional manifolds with constant positive scalar curvature and satisfying an optimal integral pinching condition: they are covered isometrically by either $\SS^{n}$ with the round metric, $\SS^{1}\times \SS^{n-1}$ with the product metric or $\SS^{1}\times \SS^{n-1}$ with a rotationally symmetric Derdzi\'nski metric.
\end{abstract}

\maketitle

\begin{center}

\noindent{\it Key Words: conformally flat manifold, rigidity}

\medskip

\centerline{\bf AMS subject classification:  53C20, 53C21}

\end{center}

\section{Introduction}

\noindent 

In this paper, we study compact conformally flat Riemannian manifolds, i.e. compact manifolds whose metrics are locally conformally equivalent to the Euclidean metric. Riemannian surfaces are always conformally flat, hence it is natural to look to the higher-dimensional case. Kuiper~\cite{kuiper} was the first who studied global properties of this class of manifolds. He showed that every compact, simply connected, conformally flat manifolds is conformally diffeomorphic to the round sphere $\SS^{n}$. In the last years, much attention has been given to the classification of conformally flat manifolds under topological and/or geometrical assumptions. From the curvature point of view, conformal flatness is equivalent to the vanishing of the Weyl and the Cotton tensor. In particular, the Riemann tensor can be recovered by its trace part, namely the Ricci tensor. Schoen and Yau~\cite{schyau} showed that conformal flatness together with (constant) positive scalar curvature still allows much flexibility. In contrast, conditions on the Ricci curvature put strong restrictions on the geometry of the manifold. Tani~\cite{tani} proved that any compact conformally flat $n$-dimensional manifold with positive Ricci curvature and constant positive scalar curvature is covered isometrically by $\SS^{n}$ with the round metric. This result, with a {\em pointwise pinching} condition on the Ricci curvature, was generalized by many authors (for instance see~\cite{goldberg, ryan, noronha, zhu, cheng} for results and references). In~\cite{carrherz} Carron and Herzlich classify complete conformally flat manifolds of dimension $n\geq 3$ with non-negative Ricci curvature: they are either flat, or locally isometric to $\RR\times\SS^{n-1}$ with the product metric; or are globally conformally equivalent to $\RR^{n}$ or to a spherical space form. On the other hand, classification of compact conformally flat manifolds satisfying an {\em integral pinching} condition were obtained by Gursky~\cite{gursky1} and Hebey and Vaugon~\cite{hebvau2, hebvau}. They showed that $n$-dimensional spherical space form can be characterized by means of optimal $L^{n/2}$-pinching  condition on the Ricci curvature (see~\cite{pigrigset, xuzhao} for other results in this direction).

The aim of this paper is to show a new classification result for compact conformally flat $n$-dimensional manifolds with constant positive scalar curvature. A large variety of Riemannian manifolds belong to this class: manifolds which are covered isometrically by $\SS^{n}$ with the round metric or by $\SS^{1}\times\SS^{n-1}$ with the the product metric; but also quotient of $\SS^{n-k}\times\mathbb{H}^{k}$, $2k<n$ with the product metric. In general, by the work of Schoen~\cite{schoen}, one can construct conformally flat manifolds with constant positive scalar curvature by gluing copies of metrics of this type. Additional examples were constructed by Derdzi\'nski~\cite{derdz4} in the class of warped product metrics. More precisely, he showed that there exists a family of warped product metrics (with $(n-1)$-dimensional Einstein fibers) with harmonic curvature, the Ricci tensor of which is not parallel and has less than three distinct eigenvalues at any point, and proved that every manifold with these properties is covered isometrically by one of these examples (see Theorem~\ref{t-der}). When the fibers are isometric to $\SS^{n-1}$, these metrics give rise to examples of compact conformally flat manifolds with constant positive scalar curvature that we will call {\em rotationally symmetric Derdzi\'nski metrics}.

We prove that these metrics, together with the trivial ones, can be characterized as conformally flat metrics with constant positive scalar curvature and satisfying an optimal integral pinching condition. Let $(M^{n},g)$ be a $n$-dimensional Riemannian manifold. We denote by $Ric$ and $R$ the Ricci and the scalar curvature, respectively and by $E$ the trace-less Ricci tensor, i.e. $E=Ric-\frac{1}{n}R\,g$.

Our main result reads as follows:

\begin{teo} \label{t-main}
Let $(M^{n},g)$ be a compact conformally flat $n$-dimensional manifold with constant positive scalar curvature. Then
$$
\int_{M^{n}} |E|^{\frac{n-2}{n}} \Big( R - \sqrt{n(n-1)} |E| \Big) \,\leq \, 0  
$$
and equality occurs if and only if $(M^{n},g)$ is covered isometrically by either $\SS^{n}$ with the round metric, $\SS^{1}\times \SS^{n-1}$ with the product metric or $\SS^{1}\times \SS^{n-1}$ with a rotationally symmetric Derdzi\'nski metric. 
\end{teo}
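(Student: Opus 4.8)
The plan is to derive a Weitzenb\"ock identity for the trace-less Ricci tensor $E$ and then integrate it against the critical weight $|E|^{\frac{n-2}{n}}$, after controlling the gradient term by a sharp refined Kato inequality and the cubic term by the sharp algebraic estimate for $\trace(E^{3})$. First I would record the two differential consequences of the hypotheses. Since $R$ is constant, the second contracted Bianchi identity gives $\div E=0$; since $(M^{n},g)$ is conformally flat the Cotton tensor vanishes, which with $R$ constant reduces to $\nabla_{k}E_{ij}=\nabla_{i}E_{kj}$. Hence $E$ is a trace-free, divergence-free Codazzi tensor, and consequently $\nabla E$ is a totally symmetric, trace-free $3$-tensor. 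Commuting derivatives by means of these two conditions and substituting the conformally flat expression for the Riemann tensor (so that every curvature term is rewritten through $R$, $E$ and $E^{2}$), I expect the identity
$$\tfrac12\,\Delta|E|^{2}=|\nabla E|^{2}+\tfrac{n}{n-2}\,\trace(E^{3})+\tfrac{R}{n-1}\,|E|^{2}.$$
The coefficients can be pinned down (and the computation checked) on $\SS^{1}\times\SS^{n-1}$ with the product metric, where $\nabla E=0$ and the two curvature terms cancel identically.

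Next I would invoke two sharp inequalities. Because $\nabla E$ lies in the bundle of totally symmetric trace-free $3$-tensors, the refined Kato inequality $|\nabla E|^{2}\ge\frac{n+2}{n}\,\big|\nabla|E|\big|^{2}$ holds; and for any trace-free symmetric tensor one has $\trace(E^{3})\ge-\frac{n-2}{\sqrt{n(n-1)}}\,|E|^{3}$, with equality exactly when $E$ has two distinct eigenvalues of multiplicities $1$ and $n-1$. Writing $f=|E|$ and $q=\frac{n-2}{n}$, the Weitzenb\"ock identity yields
$$\Delta\!\big(f^{q}\big)=q\,f^{q-2}|\nabla E|^{2}+q(q-2)\,f^{q-2}\big|\nabla f\big|^{2}+q\,f^{q-2}\Big(\tfrac{n}{n-2}\trace(E^{3})+\tfrac{R}{n-1}f^{2}\Big).$$
Integrating over $M^{n}$ and inserting the refined Kato bound, the two gradient contributions combine with coefficient $-\frac{n+2}{n}+2-q$, which vanishes \emph{precisely} at the critical exponent $q=\frac{n-2}{n}$; this is exactly why that exponent appears in the statement. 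What survives is $\int_{M^{n}}f^{q-2}\big(\frac{n}{n-2}\trace(E^{3})+\frac{R}{n-1}f^{2}\big)\le0$, and the algebraic estimate then gives $\frac{1}{n-1}\int_{M^{n}}f^{q}\big(R-\sqrt{n(n-1)}\,f\big)\le\int_{M^{n}}f^{q-2}(\dots)\le0$, which is the asserted inequality. The singular set $\{E=0\}$ would be handled by regularizing $f$ as $(|E|^{2}+\eps)^{1/2}$ and letting $\eps\to0$.

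For the equality statement, equality forces both the refined Kato inequality and the algebraic estimate to be saturated pointwise on $\{E\neq0\}$. Saturation of the latter means $\Ric$ has at most two distinct eigenvalues, of multiplicities $1$ and $n-1$; together with the Codazzi property this is exactly the harmonic-curvature, at-most-two-eigenvalue situation covered by Derdzi\'nski. If $E\equiv0$ the manifold is conformally flat and Einstein with $R>0$, hence isometrically covered by the round $\SS^{n}$; otherwise I would appeal to Theorem~\ref{t-der} to identify $(M^{n},g)$ with $\SS^{1}\times\SS^{n-1}$ carrying either the product metric (the parallel case $\nabla E=0$) or a rotationally symmetric Derdzi\'nski metric. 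The hard parts will be establishing the sharp refined Kato constant $\frac{n+2}{n}$ for trace-free divergence-free Codazzi tensors, and pushing the pointwise saturation information through the singular set $\{E=0\}$ to recover the global warped-product structure needed to conclude via Theorem~\ref{t-der}.
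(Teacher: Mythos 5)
Your proposal is correct and follows essentially the same route as the paper: the Weitzenb\"ock identity for the trace-free Codazzi tensor $E$ (the paper derives it once for general Codazzi tensors with constant trace and then specializes), the sharp Kato inequality $|\nabla E|^{2}\geq\frac{n+2}{n}\,|\nabla|E||^{2}$, the Okumura-type estimate $\trace(E^{3})\geq-\frac{n-2}{\sqrt{n(n-1)}}|E|^{3}$, the exact cancellation of the gradient terms at the critical exponent $q=\frac{n-2}{n}$, and the identical equality analysis (Einstein case giving the round $\SS^{n}$; otherwise Codazzi Ricci, i.e.\ harmonic curvature, with at most two eigenvalues at each point, split into the parallel case, which the paper settles by de Rham splitting, and the non-parallel case, settled by Theorem~\ref{t-der}). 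The one genuine difference is the treatment of the set $\{E=0\}$: the paper truncates, $f_{\eps}=\max(|E|,\eps)$, multiplies the Weitzenb\"ock inequality by $f_{\eps}^{-\frac{n+2}{n}}$, integrates by parts, and then must invoke Kazdan's unique continuation theorem (Lemma~\ref{l-kaz}) to know $\{E=0\}$ has measure zero so that $|E|^{\frac{n+2}{n}}f_{\eps}^{-\frac{n+2}{n}}\to1$ a.e.; your smooth regularization $f_{\eps}=(|E|^{2}+\eps)^{1/2}$ dispenses with this, since $|\nabla f_{\eps}|^{2}\leq\frac{n}{n+2}|\nabla E|^{2}$ holds everywhere (at zeros of $E$ one has $\nabla f_{\eps}=0$), the same cancellation at $q=\frac{n-2}{n}$ occurs, and $f_{\eps}^{q-2}|E|^{2}\nearrow|E|^{q}$ pointwise on all of $M^{n}$, so monotone (or dominated) convergence yields the inequality with no measure-theoretic input --- a small but real simplification. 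Two further remarks: the sharp constant $\frac{n+2}{n}$ you flag as a ``hard part'' is a known result of Bourguignon, proved in Hebey--Vaugon, which the paper simply cites (Lemma~\ref{l-kat}); and your final worry about ``pushing the saturation through the singular set'' is unnecessary, because equality forces Okumura saturation a.e.\ on $\{E\neq0\}$, hence everywhere by continuity, and Derdzi\'nski's hypothesis of fewer than three distinct Ricci eigenvalues \emph{at each point} is automatically satisfied at points where $E=0$, so the singular set causes no additional difficulty in the equality case.
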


Since $E\equiv 0$ on $\SS^{n}$ with the round metric, while $R \equiv \sqrt{n(n-1)}|E|$ on $\SS^{1}\times \SS^{n-1}$ with the product metric, Theorem~\ref{t-main} can be interpreted as a rigidity result for an interpolation curvature estimate.

\

\section{Codazzi tensors with constant trace}

Let $(M^{n},g)$ be a smooth Riemannian manifold of dimension $n\geq 3$ and consider a {\em Codazzi tensor} $T$ on $M^{n}$, i.~e., a symmetric bilinear form satisfying the Codazzi equation 
$$
(\nabla_{X} T)(Y,Z)=(\nabla_{Y} T)(X,Z)\, ,
$$ 
for every tangent vectors $X,Y,Z$. For an overview on manifolds admitting a Codazzi tensor see~\cite[Chapter 16.C]{Besse}. In all this section we will assume that $T$ has constant trace. In particular, the trace-free tensor $T'=T-\frac{1}{n}\hbox{tr}(T)\,g$ is again a Codazzi tensor. In a local coordinate system, we have
\begin{equation}\label{eq-cod}
\nabla_{k} T'_{ij} \,=\, \nabla_{j} T'_{ik} \,.
\end{equation}
Throughout the article, the Einstein convention of summing over the repeated indices will be adopted. Taking the covariant derivative of the Codazzi equation and tracing we obtain
\begin{eqnarray*}
 \Delta T'_{ij} &=& \nabla_{k} \nabla_{j} T'_{ik} \\
 &=& \nabla_{j} \nabla_{k} T'_{ik} - R_{ikjl} T'_{kl} + R_{jk} T'_{ik} \,,
\end{eqnarray*}
where we have used the commutation rules of covariant derivatives of symmetric two tensors. Here $R_{ikjl}$ and $R_{jk}$ denote the components of the Riemann and Ricci tensor respectively. Now, since $T'$ is trace-free, from~\eqref{eq-cod} one has $\nabla_{k} T'_{ik} = \nabla_{i} T'_{kk} = 0$. Thus, any trace-free Codazzi tensor $T'$ satisfies the following elliptic system
\begin{equation}\label{eq-ell}
\Delta T'_{ij} \,=\, - R_{ikjl} T'_{kl} + R_{jk} T'_{ik}\,.
\end{equation}
In particular, the following Weitzenb\"ock formula holds
\begin{equation}\label{eq-wei}
\frac{1}{2}\Delta |T'|^{2} \,=\, |\nabla T|^{2} - R_{ikjl} T'_{ij}T'_{kl} + R_{jk} T'_{ij}T'_{ik} \,.
\end{equation}
Using~\eqref{eq-wei}, Berger and Ebin~\cite{berebi} showed that every Codazzi tensor $T$ with constant trace on a compact Riemannian manifold with non-negative sectional curvature is parallel. Morever, if the sectional curvature are positive at some point, then $T'$ must vanish. 

The aim of this section is to show a vanishing theorem for Codazzi tensor with constant trace, in the spirit of the work Gursky~\cite{gursky2} on conformal vector fields. In this paper the author proved a vanishing theorem for conformal vector fields on four-manifold with negative scalar curvature and satisfying and integral pinching condition on the Ricci tensor (see also~\cite{huli2} for other results in this direction). The proof of this result is an improvement of the Bochner method~\cite{bochner} and strongly relies on the validity of a refined Kato-type inequality involving the covariant derivative of the conformal vector field. As first observed by Bourguignon~\cite{jpb2}, trace-free Codazzi tensor satisfies the following sharp inequality (for a proof, see for instance~\cite{hebvau}).
\begin{lemma}\label{l-kat}
Let $T'$ be a trace-free Codazzi tensor on a Riemannian manifold $(M^{n},g)$ and let $\Omega_{0}=\{p\in M^{n}:\, |T'|(p)\neq 0\}$. Then, on $\Omega_{0}$, 
$$
|\nabla T'|^{2} \, \geq \, \frac{n+2}{n} |\nabla |T'| |^{2} \,.
$$
\end{lemma}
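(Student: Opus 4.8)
The plan is to reduce the inequality to a pointwise algebraic statement on $\Omega_0$ and then prove that statement by completing a square. First I would pin down the algebraic structure of $\nabla T'$. Setting $S_{kij}=\nabla_k T'_{ij}$, symmetry of $T'$ gives $S_{kij}=S_{kji}$ and the Codazzi equation~\eqref{eq-cod} gives $S_{kij}=S_{jik}$; as the transpositions $(2\,3)$ and $(1\,3)$ generate the whole symmetric group on three letters, $S$ is \emph{totally symmetric}. Since $T'$ is trace-free and, as already noted, divergence-free, one has $S_{kii}=0$, and by total symmetry every trace of $S$ vanishes. On $\Omega_0$ put $u=|T'|$, $w=T'/u$ (a symmetric trace-free tensor with $|w|=1$) and $a_k=\nabla_k u=w_{ij}S_{kij}$. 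Because $|a|^2=|\nabla |T'||^{2}$ and $|S|^2=|\nabla T'|^{2}$, the assertion of the lemma is equivalent to the pointwise bound $|a|^2\le \frac{n}{n+2}|S|^2$.

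Next I would introduce the trace-free symmetrization of $a\otimes w$. Writing $b_k=(wa)_k=w_{ki}a_i$, set
\begin{equation*}
\tilde R_{kij}=\big(a_k w_{ij}+a_i w_{kj}+a_j w_{ik}\big)-\frac{2}{n+2}\big(b_k\delta_{ij}+b_i\delta_{kj}+b_j\delta_{ki}\big),
\end{equation*}
where the coefficient $\frac{2}{n+2}$ is exactly the one rendering $\tilde R$ trace-free. Using the total symmetry and the trace-freeness of $S$, a short computation gives $\langle S,\tilde R\rangle=3|a|^2$ (each symmetrized term contributes $|a|^2$, while the $\delta$-terms pair to zero against the trace-free $S$), and expanding the norm yields $|\tilde R|^2=3|a|^2+\frac{6n}{n+2}|wa|^2$. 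Cauchy--Schwarz, $\langle S,\tilde R\rangle^2\le |S|^2\,|\tilde R|^2$, then gives
\begin{equation*}
|S|^2\ \ge\ \frac{9|a|^4}{\,3|a|^2+\frac{6n}{n+2}|wa|^2\,}.
\end{equation*}

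To finish I would exploit that $w$ is a unit, symmetric, trace-free tensor: its eigenvalues $\lambda_i$ satisfy $\sum_i\lambda_i=0$ and $\sum_i\lambda_i^2=1$, so the largest one obeys $\lambda_1^2=\big(\sum_{i\ge2}\lambda_i\big)^2\le(n-1)\sum_{i\ge2}\lambda_i^2=(n-1)(1-\lambda_1^2)$, i.e. $\lambda_1^2\le\frac{n-1}{n}$. Hence $\|w\|_{\mathrm{op}}^2\le\frac{n-1}{n}$ and $|wa|^2\le\frac{n-1}{n}|a|^2$. Substituting this into the denominator above bounds it by $\frac{9n}{n+2}|a|^2$, and therefore $|S|^2\ge\frac{n+2}{n}|a|^2$, which is precisely the claimed inequality. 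Tracing the two estimates, equality forces $S$ proportional to $\tilde R$ and $a$ to be a top eigenvector of $w$ realizing $\|w\|_{\mathrm{op}}^2=\frac{n-1}{n}$, i.e. the one-large-eigenvalue configuration characteristic of the extremal (product and Derdzi\'nski) models.

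I expect the delicate point to be the two tensor identities $\langle S,\tilde R\rangle=3|a|^2$ and $|\tilde R|^2=3|a|^2+\frac{6n}{n+2}|wa|^2$: it is the precise trace correction $\frac{2}{n+2}$ that makes the constant come out sharp. The main conceptual step is recognizing the correct test tensor $\tilde R$ and realizing that the elementary operator-norm bound $\|w\|_{\mathrm{op}}^2\le\frac{n-1}{n}$ for a unit trace-free symmetric tensor is exactly what upgrades the generic Kato inequality (with constant $1$) to the refined constant $\frac{n+2}{n}$.
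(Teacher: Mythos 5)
Your proof is correct --- I checked the algebra and it goes through. Note, however, that the paper itself does not prove Lemma~\ref{l-kat} at all: it attributes the inequality to Bourguignon~\cite{jpb2} and points to Hebey--Vaugon~\cite{hebvau} for a proof, so there is no internal argument to compare yours against; what you have produced is a genuine, self-contained substitute for that citation. Your route is the standard refined-Kato technique (in the spirit of Bourguignon and of Calderbank--Gauduchon--Herzlich): reduce to the pointwise algebraic claim $|a|^2\le\frac{n}{n+2}|S|^2$ for the totally symmetric, totally trace-free tensor $S=\nabla T'$, test $S$ against the trace-free symmetrization $\tilde R$ of $a\otimes w$ via Cauchy--Schwarz, and close the estimate with the operator-norm bound $\|w\|_{\mathrm{op}}^2\le\frac{n-1}{n}$ for a unit trace-free symmetric tensor. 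All the key identities are right: total symmetry of $S$ does follow from the Codazzi equation plus symmetry of $T'$ (the transpositions $(2\,3)$ and $(1\,3)$ generate $S_3$), the coefficient $\frac{2}{n+2}$ does make $\tilde R$ trace-free, and both $\langle S,\tilde R\rangle=3|a|^2$ and $|\tilde R|^2=3|a|^2+\frac{6n}{n+2}|wa|^2$ check out, as does the final arithmetic $3+\frac{6(n-1)}{n+2}=\frac{9n}{n+2}$. Two cosmetic points: when you divide by $|\tilde R|^2$ you should note that if $\tilde R=0$ then $\langle S,\tilde R\rangle=3|a|^2=0$ and the inequality is trivial, so the division is harmless; and your closing remark on the equality case (forcing $a$ into a top eigendirection of $w$ with the eigenvalue pattern $(\lambda,\dots,\lambda,-(n-1)\lambda)$) is a bonus consistent with, and illuminating for, the rigidity discussion surrounding~\eqref{eq-oku} and Lemma~\ref{l-pin}, where exactly this configuration characterizes the product and Derdzi\'nski models.
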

From the previous equation, on $\Omega_{0}$, we therefore have
\begin{equation}\label{eq-inq}
\frac{1}{2}\Delta |T'|^{2} \,\geq\, \frac{n+2}{n}|\nabla |T'||^{2} - R_{ikjl} T'_{ij}T'_{kl} + R_{jk} T'_{ij}T'_{ik} \,.
\end{equation}
Since we want to prove an integral estimate, we will need to apply~\eqref{eq-inq} on the whole $M^{n}$. To do this we use the following:
\begin{lemma}\label{l-kaz}
Let $T'$ be a, non-trivial, trace-free Codazzi tensor on the Riemannian manifold $(M^{n},g)$ and let $\Omega_{0}=\{p\in M^{n}:\, |T'|(p)\neq 0\}$. Then $\hbox{Vol}\,(M^{n}\setminus\Omega_{0}) = 0$. In particular~\eqref{eq-inq} holds in an $H^{1}$-sense on $M^{n}$.
\end{lemma}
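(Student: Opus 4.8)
The plan is to read the conclusion as a unique continuation statement for the tensor $T'$ and then to convert it into a measure estimate on its nodal set. The starting point is the elliptic system \eqref{eq-ell}: since the metric is smooth, elliptic regularity makes $T'$ smooth, and on any coordinate chart with relatively compact closure the curvature coefficients are bounded, so \eqref{eq-ell} yields a pointwise differential inequality $|\Delta T'|\le C\,|T'|$, with $C$ depending only on a local bound for the curvature. This is a second-order system whose principal part is the scalar Laplacian acting diagonally on the components $T'_{ij}$, the coupling being of order zero; it therefore falls within the scope of the unique continuation theorem of Aronszajn and of its geometric refinement due to Kazdan. First I would invoke strong unique continuation: on the connected manifold $M^{n}$, a solution of $|\Delta T'|\le C\,|T'|$ that vanishes to infinite order at a single point must vanish identically. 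Since $T'$ is assumed non-trivial, it follows that the order of vanishing of $T'$ is finite at every point of the nodal set $Z:=M^{n}\setminus\Omega_{0}=\{|T'|=0\}$.

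Next I would deduce that $Z$ is Lebesgue-negligible from the finiteness of the vanishing order, by an elementary stratification. Write $Z=\bigcup_{k\ge 1}A_{k}$, where $A_{k}$ is the set of points at which $T'$ vanishes to order exactly $k$; by the previous step there is no contribution from infinite order, so this exhausts $Z$. Fix $p\in A_{k}$: all derivatives of the components $T'_{ij}$ of order at most $k-1$ vanish at $p$, while some derivative of order $k$ does not. Choosing a multi-index $\alpha=\beta+e_{m}$ with $|\beta|=k-1$ and $D^{\alpha}T'_{ij}(p)\neq 0$, the scalar function $h:=D^{\beta}T'_{ij}$ satisfies $h\equiv 0$ on $A_{k}$ (each point of $A_{k}$ kills all derivatives of order $\le k-1$) and $\nabla h(p)\neq 0$. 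By the implicit function theorem $\{h=0\}$ is a smooth hypersurface near $p$, so $A_{k}$ is locally contained in an $(n-1)$-dimensional submanifold; covering $A_{k}$ by countably many such neighbourhoods gives $\mathrm{Vol}(A_{k})=0$, whence $\mathrm{Vol}(Z)=0$. Alternatively, one may quote the nodal-set regularity of Han and Lin, which directly bounds the $(n-1)$-dimensional Hausdorff measure of $Z$.

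Finally, the $H^{1}$ statement follows from the negligibility of $Z$. The function $|T'|$ is Lipschitz on $M^{n}$, hence lies in $H^{1}(M^{n})$, and its distributional gradient agrees almost everywhere with the classical one computed on $\Omega_{0}$. Since $\mathrm{Vol}(Z)=0$, the refined Kato inequality of Lemma \ref{l-kat}, valid pointwise on $\Omega_{0}$, combined with the Weitzenb\"ock identity \eqref{eq-wei}, upgrades \eqref{eq-inq} to an inequality between $H^{1}$ functions that may be integrated against non-negative test functions over all of $M^{n}$, the excluded set $Z$ contributing nothing to any of the integrals. The main obstacle, and the only genuinely non-elementary ingredient, is the unique continuation in the first step: one must verify that the diagonal-principal-part structure of \eqref{eq-ell} together with the bound $|\Delta T'|\le C\,|T'|$ really brings the system under Aronszajn and Kazdan. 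Once finiteness of the vanishing order is secured, the stratification argument and the $H^{1}$ upgrade are routine.
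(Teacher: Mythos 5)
Your argument is correct, but it reaches the measure-zero conclusion by a different mechanism than the paper does. The paper's proof is a single citation: equation \eqref{eq-ell} exhibits $T'$ as a solution of an elliptic system with diagonal principal part and lower-order coupling, and Kazdan's Theorem 1.8 applied to that system says directly that a solution vanishing on a set of \emph{positive measure} must vanish identically; the contrapositive is exactly the lemma. You instead invoke only pointwise strong unique continuation (infinite-order vanishing at a single point forces $T'\equiv 0$ on the connected manifold) and then prove the measure estimate by hand: every point of $Z=M^{n}\setminus\Omega_{0}$ has finite vanishing order, $Z$ stratifies as $\bigcup_{k\geq 1}A_{k}$ by exact order, and near $p\in A_{k}$ the function $h=D^{\beta}T'_{ij}$, chosen with $|\beta|=k-1$ and $\partial_{m}h(p)\neq 0$, vanishes on all of $A_{k}$ while having nonzero gradient at $p$, so the implicit function theorem traps $A_{k}$ locally inside a smooth hypersurface; a Lindel\"of covering and the countable union over $k$ give $\mathrm{Vol}\,(Z)=0$. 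This is sound --- in effect you reprove the measure-theoretic content of Kazdan's theorem (whose own proof runs through Lebesgue density points instead) rather than quote it. The trade-off: the paper's route is a one-liner, while yours is self-contained modulo Aronszajn's theorem and yields slightly more, namely that $Z$ is contained in countably many smooth hypersurfaces rather than merely being null. Your $H^{1}$ upgrade at the end (negligibility of $Z$ plus local Lipschitzness of $|T'|$ turn the pointwise inequality \eqref{eq-inq} on $\Omega_{0}$ into an a.e.\ inequality that can be tested against nonnegative functions) is exactly what the paper leaves implicit, and the one hypothesis you flag for verification --- that the system \eqref{eq-ell} falls under Aronszajn/Kazdan --- is standard: written in coordinates, the rough Laplacian contributes only additional first-order terms with smooth coefficients, which the differential inequality $|\Delta u|\leq c\,(|u|+|\nabla u|)$ tolerates, and this is precisely the fact the paper imports wholesale from Kazdan.
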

\begin{proof} The lemma follows from equation~\eqref{eq-ell} and the unique continuation result of Kazdan~\cite[Theorem 1.8]{kazdan} for solutions of elliptic system on Riemannian manifolds.
\end{proof}

The main result of this section is the following integral inequality on trace-free Codazzi tensor.

\begin{prop}\label{p-est}
Let $T'$ be a, non-trivial, trace-free Codazzi tensor on the Riemannian manifold $(M^{n},g)$. For $\eps>0$, define $\Omega_{\eps}=\{p\in M^{n}:\, |T'|(p)\geq \eps \}$, and 
$$
f_{\eps} \,=\left\{
\begin{array}{ccc}
|T'|(p) &\hbox{if} & p\in \Omega_{\eps} \\
\eps &\hbox{if}& p\in M^{n}\setminus\Omega_{\eps} \,.
\end{array} \right.
$$
Then
$$
\int_{M^{n}} \big(- R_{ikjl} T'_{ij}T'_{kl} + R_{jk} T'_{ij}T'_{ik}\big) \, f_{\eps}^{-\frac{n+2}{n}} \,\leq\, 0 \,.
$$
\end{prop}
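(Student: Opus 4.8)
The plan is to integrate the pointwise inequality \eqref{eq-inq} against the nonnegative weight $f_{\eps}^{-\frac{n+2}{n}}$ and to absorb the resulting gradient terms by an integration by parts. Writing $u=|T'|$ and rearranging \eqref{eq-inq}, on the open set $\Omega_{0}$ one has
\[
-R_{ikjl}T'_{ij}T'_{kl}+R_{jk}T'_{ij}T'_{ik}\,\leq\, \tfrac{1}{2}\Delta u^{2}-\tfrac{n+2}{n}\,|\nabla u|^{2}.
\]
By Lemma~\ref{l-kaz} the complement of $\Omega_{0}$ is negligible, so this inequality holds almost everywhere on $M^{n}$; since $f_{\eps}^{-\frac{n+2}{n}}\geq 0$ is bounded (by $\eps^{-\frac{n+2}{n}}$), I may multiply through by it and integrate to obtain
\[
\int_{M^{n}}\big(-R_{ikjl}T'_{ij}T'_{kl}+R_{jk}T'_{ij}T'_{ik}\big)\,f_{\eps}^{-\frac{n+2}{n}}\,\leq\,\int_{M^{n}}\Big(\tfrac{1}{2}\Delta u^{2}-\tfrac{n+2}{n}\,|\nabla u|^{2}\Big)f_{\eps}^{-\frac{n+2}{n}}.
\]
It then remains only to show that the right-hand side is nonpositive.

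First I would treat the Laplacian term by integration by parts with no boundary contribution. The key point is that $f_{\eps}\in H^{1}(M^{n})$ satisfies $\nabla f_{\eps}=\nabla u$ on $\Omega_{\eps}$ and $\nabla f_{\eps}=0$ almost everywhere on $M^{n}\setminus\Omega_{\eps}$, since $f_{\eps}$ is constant there. Using $\tfrac{1}{2}\nabla u^{2}=u\,\nabla u$ and $\nabla f_{\eps}^{-\frac{n+2}{n}}=-\tfrac{n+2}{n}f_{\eps}^{-\frac{2n+2}{n}}\nabla f_{\eps}$, this yields
\[
\int_{M^{n}}\tfrac{1}{2}\Delta u^{2}\,f_{\eps}^{-\frac{n+2}{n}}=-\int_{M^{n}}u\,\nabla u\cdot\nabla f_{\eps}^{-\frac{n+2}{n}}=\tfrac{n+2}{n}\int_{\Omega_{\eps}}u^{-\frac{n+2}{n}}\,|\nabla u|^{2},
\]
because on $\Omega_{\eps}$ one has $f_{\eps}=u$ and $\nabla f_{\eps}=\nabla u$, while the contribution of $M^{n}\setminus\Omega_{\eps}$ vanishes.

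Next I would split the remaining gradient integral according to the same partition,
\[
\tfrac{n+2}{n}\int_{M^{n}}|\nabla u|^{2}f_{\eps}^{-\frac{n+2}{n}}=\tfrac{n+2}{n}\int_{\Omega_{\eps}}u^{-\frac{n+2}{n}}|\nabla u|^{2}+\tfrac{n+2}{n}\,\eps^{-\frac{n+2}{n}}\int_{M^{n}\setminus\Omega_{\eps}}|\nabla u|^{2},
\]
and subtract it from the previous identity. The two integrals over $\Omega_{\eps}$ cancel exactly, leaving
\[
\int_{M^{n}}\Big(\tfrac{1}{2}\Delta u^{2}-\tfrac{n+2}{n}\,|\nabla u|^{2}\Big)f_{\eps}^{-\frac{n+2}{n}}=-\tfrac{n+2}{n}\,\eps^{-\frac{n+2}{n}}\int_{M^{n}\setminus\Omega_{\eps}}|\nabla u|^{2}\,\leq\,0,
\]
which is precisely the claimed estimate.

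The main obstacle here is analytic rather than algebraic: one must work near the zero set $\{u=0\}$, where $u=|T'|$ fails to be smooth, and one must justify the integration by parts. This is exactly where Lemma~\ref{l-kaz} is essential, since it guarantees that $\{u=0\}$ is negligible, so that the pointwise inequality \eqref{eq-inq} — a priori available only on $\Omega_{0}$ — holds in the $H^{1}$-sense on all of $M^{n}$; the cut-off defining $f_{\eps}$ simultaneously keeps the weight $f_{\eps}^{-\frac{n+2}{n}}$ bounded, so that all the integrals above converge and the boundary-free integration by parts is legitimate on the compact $M^{n}$. Once these regularity issues are settled, the proof reduces to the elementary cancellation displayed above.
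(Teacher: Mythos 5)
Your proposal is correct and follows essentially the same route as the paper: multiply \eqref{eq-inq} by the Lipschitz weight $f_{\eps}^{-\frac{n+2}{n}}$ (legitimate a.e.\ on $M^{n}$ thanks to Lemma~\ref{l-kaz}), integrate the Laplacian term by parts using that $\nabla f_{\eps}=\nabla|T'|$ on $\Omega_{\eps}$ and $\nabla f_{\eps}=0$ off it, and observe that the gradient contributions over $\Omega_{\eps}$ cancel, leaving only a term of favorable sign supported on $M^{n}\setminus\Omega_{\eps}$. The only cosmetic difference is bookkeeping: the paper keeps everything on one side of the inequality and discards a nonnegative remainder, while you isolate the curvature integral and bound it by an explicitly nonpositive quantity.
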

\begin{proof}
 Multiplying both side of inequality~\eqref{eq-inq} by $f_{\eps}^{\frac{n+2}{n}}$ and integrating by parts, we get
\begin{eqnarray*}
0 &\geq & -\frac{1}{2}\int_{M^{n}} \Delta |T'|^{2} f_{\eps}^{-\frac{n+2}{n}} + \frac{n+2}{n} \int_{M^{n}} |\nabla |T'||^{2} f_{\eps}^{-\frac{n+2}{n}} \\
&& + \int_{M^{n}} \big(- R_{ikjl} T'_{ij}T'_{kl} + R_{jk} T'_{ij}T'_{ik}\big) \,f_{\eps}^{-\frac{n+2}{n}}\\
&=& \frac{1}{2}\int_{M^{n}} \langle \nabla |T'|^{2}, \nabla \big(f_{\eps}^{-\frac{n+2}{n}}\big)\rangle + \frac{n+2}{n} \int_{M^{n}} |\nabla |T'||^{2} f_{\eps}^{-\frac{n+2}{n}} \\
&& + \int_{M^{n}} \big(- R_{ikjl} T'_{ij}T'_{kl} + R_{jk} T'_{ij}T'_{ik}\big) \,f_{\eps}^{-\frac{n+2}{n}}\\
&=& -\frac{n+2}{n}\int_{M^{n}} \langle \nabla |T'|, \nabla f_{\eps}\rangle |T'| \, f_{\eps}^{-\frac{2}{n}} + \frac{n+2}{n} \int_{M^{n}} |\nabla |T'||^{2} f_{\eps}^{-\frac{n+2}{n}} \\
&& + \int_{M^{n}} \big(- R_{ikjl} T'_{ij}T'_{kl} + R_{jk} T'_{ij}T'_{ik}\big) \,f_{\eps}^{-\frac{n+2}{n}} \,.
\end{eqnarray*}
Since $f_{\eps}=|T'|$ on $\Omega_{\eps}$ and $\nabla f_{\eps} =0$ on $M^{n}\setminus \Omega_{\eps}$, we obtain
\begin{eqnarray*}
0 &\geq &  -\frac{n+2}{n}\int_{M^{n}} |\nabla f_{\eps}|^{2}  \, f_{\eps}^{-\frac{n+2}{n}} + \frac{n+2}{n} \int_{M^{n}} |\nabla |T'||^{2} f_{\eps}^{-\frac{n+2}{n}} \\
&& + \int_{M^{n}} \big(- R_{ikjl} T'_{ij}T'_{kl} + R_{jk} T'_{ij}T'_{ik}\big) \,f_{\eps}^{-\frac{n+2}{n}} \\
&=&  \frac{n+2}{n} \int_{M^{n} \setminus \Omega_{\eps}} |\nabla |T'||^{2} f_{\eps}^{-\frac{n+2}{n}} \\
&& + \int_{M^{n}} \big(- R_{ikjl} T'_{ij}T'_{kl} + R_{jk} T'_{ij}T'_{ik}\big) \,f_{\eps}^{-\frac{n+2}{n}} \\
&\geq & \int_{M^{n}} \big(- R_{ikjl} T'_{ij}T'_{kl} + R_{jk} T'_{ij}T'_{ik}\big) \,f_{\eps}^{-\frac{n+2}{n}} \,.
\end{eqnarray*}
This concludes the proof.
\end{proof}

\

\section{Proof of Theorem~\ref{t-main}}

Throughout this section $(M^{n},g)$, $n\geq 3$, is a compact conformally flat Riemannian manifold with constant positive scalar curvature. To fix the notation, we recall the decomposition of the Riemann tensor into the Weyl, the Ricci and the scalar curvature part
$$
R_{ikjl} \,= \, W_{ikjl} + \frac{1}{n-2}\big(R_{ij}g_{kl}-R_{il}g_{jk}+R_{kl}g_{ij}-R_{jk}g_{il}\big) - \frac{R}{(n-1)(n-2)}\big(g_{ij}g_{kl}-g_{il}g_{jk}\big) \,.
$$
Since $g$ is conformally flat, then, if $n\geq 4$, the Weyl tensor must be identically zero. On the other hand, in dimension $n=3$, the Weyl tensor is zero for algebraic reasons and conformally flatness is equivalent to the vanishing of the Cotton tensor
$$
\CCC_{ijk} \,=\, \nabla_{k} \RRR_{ij} - \nabla_{j} \RRR_{ik} -
\tfrac{1}{2(n-1)} \big( \nabla_{k} \RRR \, g_{ij} - \nabla_{j} \RRR \, g_{ik} \big)\,.
$$ 
Moreover, when $n\geq 4$, one has (see \cite[16.3]{Besse})
\begin{equation*}
\,\nabla_{l}\WWW_{ikjl} \,=\, \frac{n-3}{n-2}\CCC_{ijk}.
\end{equation*}
Hence, if we assume that the manifold is conformally flat, both Weyl and Cotton tensor are identically zero. In particular we have that the Schouten tensor
$$
A_{ij} \,=\, \frac{1}{n-2}\big( R_{ij} - \frac{1}{2(n-1)}R g_{ij} \big) 
$$
is a Codazzi tensor with constant trace, since $\hbox{tr}(A) = \frac{1}{2(n-1)} R = \hbox{const}$. This implies that the trace-less Ricci tensor $E = Ric - \frac{1}{n}R\,g$ is a Codazzi tensor too. Assume that $E$ is not identically zero. From Proposition~\ref{p-est}, we get that the following integral inequality holds
\begin{equation}\label{eq-est2}
\int_{M^{n}} \big(- R_{ikjl} E_{ij}E_{kl} + R_{jk} E_{ij}E_{ik}\big) \, f_{\eps}^{-\frac{n+2}{n}} \,\leq\, 0 \,,
\end{equation}
where 
$$
f_{\eps} \,=\left\{
\begin{array}{ccc}
|E|(p) &\hbox{if} & p\in \Omega_{\eps} \\
\eps &\hbox{if}& p\in M^{n}\setminus\Omega_{\eps} \,.
\end{array} \right.
$$
and $\Omega_{\eps}=\{p\in M^{n}:\, |E|(p)\geq \eps \}$. Since $g$ is conformally flat, the Riemann tensor becomes 
$$
R_{ikjl} \,= \, \frac{1}{n-2}\big(E_{ij}g_{kl}-E_{il}g_{jk}+E_{kl}g_{ij}-E_{jk}g_{il}\big) + \frac{R}{n(n-1)}\big(g_{ij}g_{kl}-g_{il}g_{jk}\big) \,,
$$
and a simple computation shows
$$
- R_{ikjl} E_{ij}E_{kl} + R_{jk} E_{ij}E_{ik} \,=\, \frac{1}{n-1} R |E|^{2} + \frac{n}{n-2} E_{ij}E_{ik}E_{jk} \,.
$$
Moreover, since $E$ is trace-free, we have the sharp inequality (see for instance~\cite[Lemma 2.4]{huisk9})
\begin{equation}\label{eq-oku}
E_{ij}E_{ik}E_{jk} \,\geq\, - \frac{n-2}{\sqrt{n(n-1)}}|E|^{3}  
\end{equation}
and equality holds at some point $p\in M^{n}$ if and only if $E$  can be diagonalized at $p$ with $(n-1)$-eigenvalues equal to $\lambda$ and one eigenvalue equals to $-(n-1)\lambda$, for some $\lambda\in\RR$. Hence, we get
$$
- R_{ikjl} E_{ij}E_{kl} + R_{jk} E_{ij}E_{ik} \,\geq\, \frac{1}{n-1} |E|^{2}\Big( R  - \sqrt{n(n-1)} |E| \Big) \,,
$$
and from~\eqref{eq-est2}, we obtain
$$
\int_{M^{n}} |E|^{\frac{n-2}{n}} \Big( R  - \sqrt{n(n-1)} |E| \Big) |E|^{\frac{n+2}{n}}\, f_{\eps}^{-\frac{n+2}{n}} \,\leq \, \int_{M^{n}} \big(- R_{ikjl} E_{ij}E_{kl} + R_{jk} E_{ij}E_{ik}\big) \, f_{\eps}^{-\frac{n+2}{n}} \,\leq\, 0 \,.
$$
Now, taking the limit as $\eps\rightarrow 0$, since $ |E|^{\frac{n+2}{n}}\, f_{\eps}^{-\frac{n+2}{n}} \rightarrow 1$ a.e. on $M^{n}$ by Lemma~\ref{l-kaz}, we conclude
$$
\int_{M^{n}} |E|^{\frac{n-2}{n}} \Big( R  - \sqrt{n(n-1)} |E| \Big) \,\leq \, 0 \,.
$$
Hence, we have proved the following:
\begin{lemma}\label{l-pin}
Let $(M^{n},g)$ be a compact conformally flat manifold with constant positive scalar curvature. Then
$$
\int_{M^{n}} |E|^{\frac{n-2}{n}} \Big( R - \sqrt{n(n-1)} |E| \Big) \,\leq \, 0  
$$
and equality occurs if and only if, at every point, either $E$ is null or it has an eigenvalue of multiplicity $(n-1)$ and another of multiplicity $1$. 
\end{lemma}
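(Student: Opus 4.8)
The plan is to recognize the trace-free Ricci tensor $E$ as a trace-free Codazzi tensor and then feed it into the machinery of Section~2. First I would record why $E$ is Codazzi: conformal flatness forces the Weyl tensor to vanish identically for $n\ge 4$, while for $n=3$ it vanishes for algebraic reasons, and in either case the identity $\nabla_{l}\WWW_{ikjl}=\frac{n-3}{n-2}\CCC_{ijk}$ (together with the $n=3$ definition of conformal flatness) shows that the Cotton tensor $\CCC_{ijk}$ also vanishes. Since the vanishing of $\CCC$ is exactly the Codazzi equation for the Schouten tensor $A$, and $\tr(A)=\frac{1}{2(n-1)}R$ is constant by hypothesis, $A$ is a Codazzi tensor with constant trace; subtracting its trace and rescaling, the same holds for $E=(n-2)A'$. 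If $E\equiv 0$ the asserted integral vanishes and there is nothing to prove, so I would assume $E\not\equiv 0$.

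Next I would apply Proposition~\ref{p-est} with $T'=E$ to obtain the regularized inequality \eqref{eq-est2}. The heart of the argument is then a pointwise estimate of its integrand. Inserting the conformally flat form of the curvature tensor (Weyl part removed) into $-R_{ikjl}E_{ij}E_{kl}+R_{jk}E_{ij}E_{ik}$ and using that $E$ is trace-free, a direct index computation gives the clean identity $-R_{ikjl}E_{ij}E_{kl}+R_{jk}E_{ij}E_{ik}=\frac{1}{n-1}R|E|^{2}+\frac{n}{n-2}E_{ij}E_{ik}E_{jk}$. I would then control the cubic term by the sharp algebraic inequality \eqref{eq-oku} for trace-free symmetric matrices, $E_{ij}E_{ik}E_{jk}\ge-\frac{n-2}{\sqrt{n(n-1)}}|E|^{3}$, which yields the pointwise lower bound $-R_{ikjl}E_{ij}E_{kl}+R_{jk}E_{ij}E_{ik}\ge\frac{1}{n-1}|E|^{2}\big(R-\sqrt{n(n-1)}|E|\big)$.

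To conclude, I would substitute this lower bound into \eqref{eq-est2}, write $|E|^{2}=|E|^{\frac{n-2}{n}}|E|^{\frac{n+2}{n}}$, and pass to the limit $\eps\to 0$. Here Lemma~\ref{l-kaz} is essential: it guarantees that the vanishing locus of $E$ has measure zero, so $|E|^{\frac{n+2}{n}}f_{\eps}^{-\frac{n+2}{n}}\to 1$ almost everywhere, and dominated convergence (the factor lies in $[0,1]$ while the continuous function $|E|^{\frac{n-2}{n}}(R-\sqrt{n(n-1)}|E|)$ is bounded) delivers the stated inequality. For the equality statement I would write the curvature integrand as its lower bound plus the nonnegative defect $D=\frac{n}{n-2}\big(E_{ij}E_{ik}E_{jk}+\frac{n-2}{\sqrt{n(n-1)}}|E|^{3}\big)$; vanishing of the limit integral then forces, via Fatou applied to $D f_{\eps}^{-\frac{n+2}{n}}$, that $D=0$ almost everywhere, hence $D\equiv 0$ by continuity. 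By the equality characterization in \eqref{eq-oku}, at every point $E$ is either null or has one eigenvalue of multiplicity $(n-1)$ and a complementary eigenvalue of multiplicity one.

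The step I expect to be most delicate is not the algebra but the passage to the limit: the refined Kato inequality \eqref{eq-inq} underlying Proposition~\ref{p-est} holds only on the open set $\Omega_{0}$ where $|E|\neq 0$, and there $|E|$ may fail to be smooth. The regularization $f_{\eps}$ and the unique continuation input of Lemma~\ref{l-kaz} are precisely what make the estimate valid across the measure-zero vanishing locus of $E$ and let one recover both the inequality and, through the defect term $D$, the sharp equality case; keeping track of these limiting arguments is the only genuinely non-routine point.
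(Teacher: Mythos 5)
Your proposal is correct and takes essentially the same route as the paper: recognize $E$ as a trace-free Codazzi tensor via the vanishing of the Weyl and Cotton tensors, invoke Proposition~\ref{p-est} with $T'=E$, combine the conformally flat curvature identity with the sharp trace-free inequality~\eqref{eq-oku}, and pass to the limit $\eps\to 0$ using Lemma~\ref{l-kaz}. In fact you supply two details the paper leaves implicit---the dominated-convergence justification of the limit and the Fatou argument extracting the pointwise eigenvalue structure from equality---while sharing the paper's silence on the converse implication (that the eigenvalue structure forces equality of the integral), which in both texts is left to the classification carried out afterwards.
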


Now we can conclude the proof of Theorem~\ref{t-main}. By the integral pinching assumption we have that equality in Lemma~\ref{l-pin} occurs. Hence, either $g$ is Einstein, and by conformally flatness it has constant positive sectional curvature or, at every point, the Ricci tensor has an eigenvalue of multiplicity $(n-1)$ and another of multiplicity $1$. Moreover, since the Ricci tensor is Codazzi, we have that $g$ has harmonic curvature, i.e. $\nabla_{l}R_{ikjl} \equiv 0$ (see~\cite[Chapter 16.E]{Besse}), and by the regularity result of DeTurck and Goldschmidt~\cite{detgol}, $g$ must be real analytic in suitable (harmonic) local coordinates. 

Now, suppose that the Ricci tensor has an eigenvalue of multiplicity $(n-1)$ and another of multiplicity $1$. If the Ricci tensor is parallel, by the de Rham decomposition Theorem~\cite{derham}, $(M^{n},g)$ is covered isometrically by the product of Einstein manifolds. Since $g$ is conformally flat and has positive scalar curvature, then the only possibility is that $(M^{n},g)$ is covered isometrically by $\SS^{1}\times\SS^{n-1}$ with the product metric.

On the other hand, if the Ricci tensor is not parallel, we have the following classification result of Derdzi\'nski~\cite[Theorem 2]{derdz4}:

\begin{teo}\label{t-der} 
Let $(M^{n},g)$ be a compact Riemannian manifold with harmonic curvature. If the Ricci tensor is not parallel and has less than three distinct eigenvalues at each point, then $(M^{n},g)$ is covered isometrically by $(S^{1}\times N^{n-1}, dt^{2}+F^{2}(t)g_{N})$, where $(N^{n-1},g_{N})$ is a compact Einstein manifold with positive scalar curvature and $F$ is a non-constant, positive, periodic function satisfying a precise ODE. Moreover, if $g$ is conformally flat, then $(N^{n-1},g_{N})$ is isometric to $S^{n-1}$ with the round metric.
\end{teo}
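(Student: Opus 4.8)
The plan is to exploit the structure theory of Codazzi tensors with at most two distinct eigenvalues. Recall that harmonic curvature, $\nabla_{l}R_{ikjl}=0$, is equivalent to $\Ric$ being a Codazzi tensor with constant scalar curvature. Since $\Ric$ is not parallel it cannot be a constant multiple of $g$ on any open set, so by the eigenvalue hypothesis and the real-analyticity of $g$ there is an open dense subset $M_{0}\subseteq M^{n}$ on which $\Ric$ has exactly two distinct eigenvalues, the complement being a proper analytic set where they coincide. The pinching forces the multiplicities on $M_{0}$ to be $n-1$ and $1$; write $\lambda$ and $\mu$ for the corresponding eigenvalue functions and $D_{\lambda}$, $D_{\mu}$ for the eigendistributions.

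First I would invoke the local analysis of Codazzi tensors of Derdzi\'nski and Shen. The Codazzi equation for $\Ric$ implies that $D_{\mu}$ and $D_{\lambda}$ are integrable, that the leaves of $D_{\lambda}$ are totally umbilic, and, crucially, that $\lambda$ is constant along each leaf while $d\mu$ is proportional to the unit one-form dual to $D_{\mu}$. Hence $\mu$ is locally a function of a single variable; taking the arclength parameter $t$ along the geodesic integral curves of the unit field spanning $D_{\mu}$ puts the metric in the warped-product form $dt^{2}+F(t)^{2}g_{N}$ on $M_{0}$, with $(N^{n-1},g_{N})$ a leaf of $D_{\lambda}$. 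Substituting this ansatz into the Codazzi equation and the constant-scalar-curvature relation shows that $g_{N}$ is Einstein: the eigenvalues $\lambda,\mu$ are expressed through $F,F',F''$ and the (necessarily constant) scalar curvature of the fibre, and eliminating these quantities yields the precise second-order ODE for $F$.

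The main obstacle is the passage from this local picture to the global conclusion. The unit field $T$ spanning $D_{\mu}$ is globally defined on $M_{0}$ up to sign, and its integral curves are geodesics; one must show that the warped structure extends across $M^{n}\setminus M_{0}$ and that $t$ descends to a parameter on a circle. By compactness $F$ attains a maximum and a minimum, occurring where $F'=0$, and a careful study of the ODE near these turning points --- using that $F$ stays positive (the fibre never collapses) and that $g$ is real-analytic --- shows that $F$ is non-constant and periodic. The flow of $T$, after passing to a double cover to orient it if necessary, then identifies $(M^{n},g)$ isometrically with a covering $(\SS^{1}\times N^{n-1},dt^{2}+F^{2}(t)g_{N})$; the periodicity of $F$ together with the ODE forces the fibre scalar curvature to be positive. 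Controlling the behaviour at the turning points and establishing the periodicity is the delicate heart of the argument.

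Finally, for the conformally flat case I would compute the Weyl tensor of $dt^{2}+F^{2}(t)g_{N}$. A direct computation shows that such a warped product is conformally flat precisely when $(N^{n-1},g_{N})$ has constant sectional curvature. Being a compact Einstein manifold of positive scalar curvature with constant curvature, $N$ is a spherical space form, so after passing to the isometric covering in the statement the fibre is $\SS^{n-1}$ with the round metric, as claimed.
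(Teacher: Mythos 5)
This statement is not actually proved in the paper: it is Derdzi\'nski's classification theorem, quoted from \cite[Theorem 2]{derdz4} and used as a black box in the final step of the proof of Theorem~\ref{t-main}. So the only meaningful comparison is with Derdzi\'nski's original argument, and your sketch does follow its broad strategy faithfully: the Codazzi structure theory for the eigendistributions of $\Ric$ (harmonic curvature being equivalent to $\Ric$ Codazzi, with $R$ then automatically constant), the local warped-product form, the ODE for the warping function, a global periodicity argument, and the fact that a warped product $dt^{2}+F^{2}(t)g_{N}$ is conformally flat exactly when $(N,g_{N})$ has constant sectional curvature.

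Two steps, however, are genuine gaps. First, you write that ``the pinching forces the multiplicities on $M_{0}$ to be $n-1$ and $1$'', but this theorem has no pinching hypothesis --- it assumes only that $\Ric$ has fewer than three distinct eigenvalues at each point, so a priori the two eigenvalues could have multiplicities $(k,n-k)$ with $2\leq k\leq n-2$. Ruling this out is a necessary part of the proof, and it goes as follows: if both eigenvalues $\lambda,\mu$ have multiplicity at least $2$ on some open set, the Codazzi structure theory says each is constant along the leaves of its own eigendistribution; since $k\lambda+(n-k)\mu=R$ is constant, $d\lambda$ and $d\mu$ are proportional, hence both vanish identically there; the umbilic leaves then become totally geodesic, the local de Rham splitting yields a product of Einstein metrics, so $\nabla \Ric\equiv 0$ on an open set, and real analyticity (DeTurck--Goldschmidt) propagates this to all of $M^{n}$, contradicting the hypothesis that $\Ric$ is not parallel. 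You need this argument, not a pinching assumption, to obtain the $(n-1,1)$ splitting. Second, the passage from the local picture on $M_{0}$ to the global covering by $\SS^{1}\times N^{n-1}$ --- the periodicity of $F$, the behaviour at points where the eigenvalues coalesce, and the positivity of the scalar curvature of the fibre --- is precisely the substantive content of Derdzi\'nski's theorem; your proposal labels it ``the delicate heart'' but does not carry it out. (Derdzi\'nski handles it by exhibiting a first integral of the ODE and showing that every positive solution is automatically periodic, which is cleaner than a case analysis at turning points.) As it stands, the proposal is a correct road map of the known proof rather than a proof: the two missing items are exactly where the real work lies.
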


This concludes the proof of Theorem~\ref{t-main}. Finally, we recall that splitting results for Riemannian manifolds admitting a Codazzi tensor with only two distinct eigenvalues were obtained by Derdzinski~\cite{derdz3}, Hiepko-Reckziegel~\cite{hiepkoreck} (see~\cite[Chapter~16]{Besse} for further discussion). See also a more recent result of the author with Mantegazza and Mazzieri~\cite{catmanmaz}.

\

\

\

\begin{ackn} The author is members of the Gruppo Nazionale per
l'Analisi Matematica, la Probabilit\`{a} e le loro Applicazioni (GNAMPA) of the Istituto Nazionale di Alta Matematica (INdAM) and is supported by the GNAMPA project ``Equazioni di evoluzione geometriche e strutture di tipo Einstein''. The author would like to thank Matthew Gursky for several valuable suggestions.
\end{ackn}

\

\bibliographystyle{amsplain}
\bibliography{biblio}

\

\parindent=0pt

\

\end{document}